\documentclass[11pt,a4paper]{amsart}
\usepackage[all]{xy}
\usepackage{theoremref}
\usepackage{graphicx}
\pdfoutput=1

\textwidth15.1cm \textheight22.7cm \headheight12pt
\oddsidemargin.4cm \evensidemargin.4cm \topmargin0cm
\numberwithin{equation}{section}

\title{On the c-vectors of an acyclic cluster algebra}
\author{ALFREDO N\'AJERA CH\'AVEZ}
\address {Universit\'e Paris Diderot -- Paris 7\\
          Institut de Math\'ematiques de Jussieu\\
                   UMR 7586 du CNRS\\
                   Case 7012\\
                   B\^atiment Chevaleret\\
                   75205 Paris Cedex 13\\
           France}
\email {najera@math.jussieu.fr}
\newcommand{\ie}{{\em i.e. }}
\newcommand{\cf}{{\em cf.}\ }

\newtheorem{theorem}{Theorem}

\newtheorem{proposition}[theorem]{Proposition}

\newtheorem{lemma and definition}[theorem]{Lemma and Definition}

\theoremstyle{definition}

\newtheorem{remark}[theorem]{Remark}
\newtheorem{example}[theorem]{Example}
\newtheorem{definition}[theorem]{Definition}

\newcommand{\opname}[1]{\operatorname{\mathsf{#1}}}

\renewcommand{\mod}{\opname{mod}\nolimits}

\newcommand{\Mod}{\opname{Mod}\nolimits}

\newcommand{\add}{\opname{add}\nolimits}

\newcommand{\dimv}{\underline{\dim}\,}

\newcommand{\Z}{\mathbb{Z}}
\newcommand{\T}{\mathbb{T}}

%
%
\newcommand{\Hom}{\opname{Hom}}
\newcommand{\End}{\opname{End}}

\newcommand{\Ext}{\opname{Ext}}

%
%
\newcommand{\ca}{{\mathcal A}}
\newcommand{\cb}{{\mathcal B}}
\newcommand{\cc}{{\mathcal C}}
\newcommand{\cd}{{\mathcal D}}

\begin{document}
	
\maketitle

\begin{abstract}
We prove that the set of $\mathbf{c}$-vectors of the cluster algebra associated to an acyclic quiver $Q$ coincides with the set of real Schur roots and their opposites in the root system associated to $Q$.
\end{abstract}

\renewcommand{\thefigure}{\thechapter.\arabic{figure}}
\vspace{5mm}

\section{Introduction} In the theory of cluster algebras, a prominent role is played by two families of integer vectors, namely the \textbf{c}- and the \textbf{g}-vectors. They were first introduced in \cite{Clusters 4} in order to parametrize  (respectively) the coefficients and the cluster variables of a (geometric) cluster algebra. In  \cite{Nakanishi Zelevinsky} the authors showed that both families were closely related provided that the \textbf{c}-vectors satisfy the \emph{sign-coherence} property, \ie each \textbf{c}-vector has either all its entries nonnegative or all its  entries nonpositive. Moreover, many important conjectures about cluster algebras can be proved to be true if this last condition holds. The sign-coherence of the \textbf{c}-vectors was proved in \cite{QP 2} for the case of skew- symmetric exchange matrices, using decorated representations of  quivers with potentials (see \cite{QP 1}). Alternative proofs were given in \cite{Plamondon} and \cite{Nagao}. For acyclic quivers, the clusters of \textbf{c}-vectors were characterized in \cite{Speyer Thomas}. We know that \textbf{c}-vectors are always dimension vectors of indecomposable rigid modules (over an appropriate algebra), see section $8$ of \cite{Nagao}, \cf \thref{c-vecs are roots} below. In the present note, we show the other inclusion, \ie the set of positive \textbf{c}-vectors associated to an acyclic quiver $Q$ coincides with the set of real Schur roots in the root system associated to $Q$. This result can also be obtained \cite{Thomas} using the approach presented in \cite{Speyer Thomas}. A description of the \textbf{c}-vectors for general quivers seems to be unknown. A non acyclic example is computed in \cite{Najera}.\\
Section \ref{reminders} is devoted to recall the reader the constructions used along remind section \ref{main result} to prove our main result. In section \ref{examples} we present some interesting examples concerning possible generalizations of our main result.
\subsection*{Acknowledgments}
This work is part of my PhD thesis, supervised by Professor Bernhard Keller. I would like to thank him for his guidance and patience. The final version of this note was written at the MSRI, Berkeley, during the 2012 fall semester. The author is deeply grateful to this institution for providing ideal working conditions. 

\section{Reminders} \label{reminders}

Unless otherwise stated $Q$ is an arbitrary finite quiver with $n$ vertices and $k$ is an algebraically closed field.

\subsection{Dimension vectors and root systems} Denote by $kQ$ the \emph{path algebra} associated to $Q$ and denote by $\mod(kQ)$ the category of finite dimensional right $kQ$-modules (see \cite{ASS} for background material). The category $\mod(kQ)$ is equivalent to the category of finite-dimensional representations of $Q^{\text{op}}$ over $k$. We write  $\dimv M$ for the dimension vector of the representation corresponding to a module $M\in \mod (kQ)$. 

Dimension vectors are closely related with generalized root systems (which are associated to arbitrary quivers, see \cite{Kac} for details).
\begin{theorem} (\cite{Kac})
For any integer vector $\underline{v}$, there is an indecomposable $kQ$-module $M$ with $\dimv(M)=\underline{v}$ if and only if $\underline{v}$ is a positive root in the root system associated to $Q$.
\end{theorem}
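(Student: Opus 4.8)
The plan is to prove this theorem of Kac by treating the two kinds of positive roots separately, the whole argument resting on the Bernstein--Gelfand--Ponomarev reflection functors $\Phi_i^\pm$ and on the Tits form $q$, the quadratic form attached to the Euler pairing $\inprod{\dimv M,\dimv N}=\dim\Hom(M,N)-\dim\Ext^1(M,N)$. Recall that the Weyl group $W$ of the root system acts on $\Z^n$ through the simple reflections $s_i$, preserving $q$, and that for a sink (resp.\ source) $i$ of $Q$ the functor $\Phi_i^+$ (resp.\ $\Phi_i^-$) restricts to a bijection between the indecomposable $kQ$-modules of dimension vector $\underline v\neq\alpha_i$ and the indecomposable modules over the reoriented quiver of dimension vector $s_i(\underline v)$, the only indecomposable it annihilates being $S_i$. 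Two preliminary remarks: an indecomposable module has connected support, so we may restrict to vectors with connected support; and both conditions in the statement depend only on the underlying graph of $Q$ --- for the root system this is immediate, and for the indecomposables it follows, with some care, from the above bijections --- so we are free to reorient $Q$ at each step so that the vertex we reflect at is a sink.

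For the positive \emph{real} roots I would induct on $\sum_j v_j$ (equivalently on $\ell(w)$ when $\underline v=w(\alpha_i)$). If $\underline v=\alpha_i$ is simple, the unique indecomposable is $S_i$. Otherwise one produces a vertex $k$ with $\inprod{\underline v,\alpha_k}>0$ such that $s_k(\underline v)$ is again a positive real root of strictly smaller height; by induction there is an indecomposable $M'$ of dimension vector $s_k(\underline v)$, and since $s_k(\underline v)\neq\alpha_k$ the inverse reflection functor at $k$ carries $M'$ to a nonzero indecomposable of dimension vector $\underline v$. Conversely, let $M$ be indecomposable with $\dimv M$ not a simple root. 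Run the greedy reduction: while some $k$ has $\inprod{\dimv M,\alpha_k}>0$, replace $M$ by $\Phi_k^+M$ after reorienting so that $k$ is a sink; since $M$ is indecomposable and $\dimv M\neq\alpha_k$, this stays inside the indecomposables, so $s_k(\dimv M)\geq 0$ and $\sum_j(\dimv M)_j$ strictly drops. The process therefore stops, and it can stop only at a simple root --- in which case $\dimv M$ was a real root --- or at a vector with connected support satisfying $\inprod{\,\cdot\,,\alpha_j}\leq 0$ for every $j$, i.e.\ in the fundamental region $F$.

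It remains to identify $F$ with the imaginary roots, which is where the real work is. One inclusion is a statement about the root system alone, which I would simply quote: every element of $F$ is a positive imaginary root and every positive imaginary root is $W$-conjugate to one in $F$; together with the two reductions above this already yields ``indecomposable $\Rightarrow$ positive root'' in full. The hard direction --- and the main obstacle --- is the converse for imaginary roots: \emph{every} dimension vector $\underline v\in F$ must be realized by an indecomposable module. The strategy is to show that the \emph{generic} representation of dimension vector $\underline v$ is indecomposable. For each of the finitely many splittings $\underline v=\underline v'+\underline v''$ into nonzero vectors, the representations admitting a direct sum decomposition of that type form the image of a morphism from $R(Q,\underline v')\times R(Q,\underline v'')\times GL_{\underline v}$ to the representation variety $R(Q,\underline v)$, hence a constructible subset; a dimension count --- in which the defining inequalities $\inprod{\underline v,\alpha_j}\leq 0$ of $F$ are used precisely to force each of these subsets to be proper --- shows that the decomposable representations do not exhaust $R(Q,\underline v)$, so over the algebraically closed field $k$ an indecomposable of dimension vector $\underline v$ exists. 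This geometric step is the heart of the matter; the sharper statements (uniqueness of the indecomposable for real roots, a positive-dimensional family for imaginary roots, and the orientation-independence itself) demand still more and are not needed for the present note, where the theorem is used exactly as quoted from \cite{Kac}.
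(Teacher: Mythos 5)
The central step of your sketch---that for $\underline v$ in the fundamental region $F$ the decomposable representations cannot fill up $R(Q,\underline v)$, so that the \emph{generic} representation is indecomposable---is false, and the defining inequalities of $F$ do not ``precisely'' force the needed properness. The dimension count for the locus of representations splitting as $\underline v'\oplus\underline v''$ gives a proper subset exactly when $(\underline v',\underline v'')<0$ for the symmetrized Tits form, i.e.\ $q(\underline v)>q(\underline v')+q(\underline v'')$, and this fails for multiples of isotropic roots. Concretely, for the Kronecker quiver (two vertices, two parallel arrows) the vector $\underline v=(2,2)=2\delta$ lies in $F$, but a generic pair of $2\times2$ matrices $(A,B)$ has $A$ invertible and $A^{-1}B$ with distinct eigenvalues, hence splits as a direct sum of two representations of dimension $(1,1)$: the decomposable stratum of type $\delta+\delta$ is dense because $(\delta,\delta)=0$. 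Indecomposables of dimension $2\delta$ do exist, but they depend on fewer parameters than the decomposables, so no genericity argument of the kind you propose can produce them. This isotropic case (including vectors of $F$ for a wild quiver whose support is a tame subquiver) is exactly the hard point of Kac's theorem; Kac handles it by counting absolutely indecomposable representations over finite fields and a polynomiality/nonvanishing argument (or one constructs the indecomposables directly from the classification of tame quivers), not by showing the generic representation is indecomposable.

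A second gap: your licence to ``reorient $Q$ at each step so that the vertex we reflect at is a sink'' already presupposes that the existence of an indecomposable with a given dimension vector depends only on the underlying graph, and this does \emph{not} follow from the BGP bijections once the graph has cycles---sink/source reflections do not act transitively on the orientations of a cycle (for a cyclic graph the numbers of arrows in the two directions are invariants of such flips), so an arbitrary vertex cannot in general be turned into a sink this way. Orientation-independence is itself part of the content of Kac's theorem and is proved by him with the same field-counting machinery. Note finally that the paper offers no proof to compare with: the statement is quoted from Kac, and the theorem is genuinely harder than your sketch suggests precisely at these two points.
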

\begin{definition}
A \emph{real Schur root} associated to $Q$ is the dimension vector of an indecomposable $kQ$-module without self-extensions (\emph{rigid}). These vectors are independent of $k$ (see \cite[Theorem 1]{Crawley-Boevey}). We denote by $\Phi^{re,Sch}$ the set of real Schur roots.
\end{definition}

\subsection{Tilting theory and the cluster category} In this section, we assume that $Q$ is acyclic. Denote by $\ca$ the category $\mod(kQ)$. 
\begin{definition}
We say that  a module $T \in \ca $ is \emph{tilting} if and only if $T$ is \emph{rigid}, \ie $\Ext^1_{\ca}(T,T)=0$, and $T$ is the direct sum of $n$ pairwise non isomorphic indecomposable modules. For a tilting module $T$, we denote by $B$ the algebra $\End_{\ca}(T)$ and by $\cb$ the category $\mod(B)$.
\end{definition}

 Let $C$ be a category of the form $\ca$ or $\cb$. Then the Grothendieck gropu $K_0(C)$ admits an unique bilinear form $\langle \cdot, \cdot \rangle$ for which the basis of the simples is dual to the basis of the indecomposable projectives $\lbrace P_1, \dots, P_n \rbrace$ in the sense that $\langle [P_i],[S_j]\rangle=\delta_{ij}$ for $1\leq i,j\leq n$. We call an isomorphism of Grothendieck groups an \emph{isometry} if it respects the bilinear form. We denote by $\cd^b(C)$ the bounded derived category of $C$ and by $\Sigma$ its canonical shift (or suspension) functor. The natural inclusion from $C$ to its derived category induces an isomorphism between $K_0(C)$ and $K_0(\cd^b(C))$, the Grothendieck group of $K_0(\cd^b(C))$ as a \emph{triangulated} category (\cf \cite{Happel triang}). The following theorem will be very useful in the sequel.

\begin{theorem} (\cite[Section 2]{Happel})\thlabel{isometry}
Let $T$ be a tilting module. Then the functor 
\begin{equation}
-\otimes_B^{L}T:\cd^b(\cb)\rightarrow \cd^b(\ca)
\end{equation}
is a triangle equivalence which induces an isometry of Grothendieck groups $K_0(\cb)\rightarrow K_0(\ca)$.
\end{theorem}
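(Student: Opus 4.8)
The plan is to reduce the equivalence statement to the classical theory of tilting modules and to obtain the isometry as a formal consequence. First I would check that $T$, as defined above, is a tilting module in the classical sense: apart from $\Ext^1_{\ca}(T,T)=0$ one needs $\opname{pd}_{kQ}T\leq 1$ and a short exact sequence $0\to kQ\to T^0\to T^1\to 0$ of right $kQ$-modules with $T^0,T^1\in\add T$. The projective-dimension condition is automatic since $kQ$ is hereditary. For the resolution I would invoke Bongartz's completion (see e.g. \cite{ASS}): a rigid module is always a direct summand of a classical tilting module, and over a connected hereditary algebra with $n$ simple modules a rigid module having $n$ pairwise non-isomorphic indecomposable summands is already such a tilting module. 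Granting this, the first assertion is precisely Happel's theorem \cite[Section 2]{Happel}: the total left derived functor $-\otimes_B^{L}T$ is a triangle equivalence $\cd^b(\cb)\iso\cd^b(\ca)$, a quasi-inverse being $\RHom_{kQ}(T,-)$. The underlying mechanism is that $T$ becomes a compact generator of $\cd^b(\ca)$ whose graded endomorphism algebra is concentrated in degree zero and equal to $B$, whence derived Morita theory applies. This is the only genuinely hard input, and I would simply cite it rather than reprove it.

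For the isometry, I would first observe that, because $\ca$ and $\cb$ have finite global dimension, the bilinear form $\langle\cdot,\cdot\rangle$ on $K_0(C)$ (for $C$ equal to $\ca$ or $\cb$) corresponds, under the canonical isomorphism $K_0(C)\iso K_0(\cd^b(C))$, to the Euler form
\begin{equation*}
\langle [X],[Y]\rangle=\sum_{i\in\Z}(-1)^i\dim_k\Hom_{\cd^b(C)}(X,\Sigma^{i}Y),
\end{equation*}
which is a finite sum, additive along triangles, hence well defined on $K_0(\cd^b(C))$. One checks this identification on the distinguished bases using $\Hom_{\cd^b(C)}(\Sigma^{-i}M,N)=\Ext^{i}_C(M,N)$ and $\langle[P_i],[S_j]\rangle=\dim_k\Hom_C(P_i,S_j)=\delta_{ij}$, so the form appearing in the statement is exactly the Euler form. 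The rest is formal: the equivalence $-\otimes_B^{L}T$ commutes with $\Sigma$ and induces $k$-linear bijections on all morphism spaces of the two derived categories, so it preserves the displayed expression term by term; therefore the induced isomorphism $K_0(\cd^b(\cb))\to K_0(\cd^b(\ca))$, equivalently $K_0(\cb)\to K_0(\ca)$, is an isometry.

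Thus the only obstacle of substance is the proof that $-\otimes_B^{L}T$ is a triangle equivalence, which I would take over verbatim from \cite{Happel}; recognizing $T$ as a classical tilting module and verifying that a triangle equivalence respects the Euler form are both routine.
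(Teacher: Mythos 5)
Your proposal is correct and matches the paper's treatment: the paper itself offers no proof but simply attributes the result to \cite[Section 2]{Happel}, which is exactly the hard input you cite, and your remaining steps (recognizing $T$ as a classical tilting module over the hereditary algebra $kQ$ and identifying the form $\langle\cdot,\cdot\rangle$ with the Euler form, which any triangle equivalence preserves) are the routine verifications the paper leaves implicit.
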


\begin{definition} (\cite{BMRRT})
Let $\cc$ be the \emph{cluster category} associated to $Q$, that is, the orbit category $\cd^b(\ca)/\tau^{-1}\circ\Sigma$, where $\tau$ denotes the AR-translation of $\cd^b(\ca)$. The category $\cc$ admits a canonical triangulated structure \cite{Keller triang} whose suspension functor is induced by $\Sigma$. An object $X$ in $\cc$ is called \emph{cluster tilting} if $\Hom_{\cc}(X,\Sigma X)=0$ and $X$ is the sum of $n$ pairwise non isomorphic indecomposable objects. 
\end{definition}

\begin{remark}
By construction, there is a natural functor from of $\mod(kQ)$ to $\cc$ which is faithful but not full in general. By \cite{BMRRT} we know that the image of every tilting module is a cluster tilting module.
\end{remark}

\subsection{Derived categories for dg-algebras} \label{derived categories}
We follow the approach of \cite{Keller dg}. Throughout this subsection, $A$ is a dg-algebra over $k$ with differential $d_A$. That is a $\Z$-graded $k$-algebra or equipped with a degree $1$ $k$-linear differential. A dg-module $M$ over $A$ is a graded $A$-module endowed with a differential $d_M$ satisfying
\begin{equation*}
d_M(ma)=d_M(m)a+(-1)^{|m|}md_A(a)
\end{equation*}
for every homogeneous element $m$ in $M$ of degree $|m|$, and every $a$ in $A$.
Given two dg $A$-modules $M$ and $N$, the \emph{morphism complex} is the graded vector space $\mathcal{H}om_A(M,N)$ whose $i$-th component is the subspace of the product $\prod_{j\in\Z} Hom_k(M^j,N^{j+i})$ consisting of the morphisms $f$ such that $f(ma) = f(m)a$, for all $m$ in $M$ and all $a$ in $A$, and whose differential $d$ is given by
\begin{equation*}
d(f)=f\circ d_M + (-1)^{|f|}d_N \circ f
\end{equation*}
for a homogeneous morphism $f$ of degree $|f|$.
\\
Let $\mathcal{H}(A)$ be the \emph{homotopy category} of dg $A$-modules. It has as objects the dg $A$-modules and as morphisms the $0$-th homology groups of the morphism complexes, \ie $\mathcal{H}(A)(M,N)=H^0(\mathcal{H}om_A(M,N)))$. The \emph{derived category} $\cd (A)$ is the localization \cite{Gabriel Zisman} of $\mathcal{H}A$ with respect to  the full subcategory of acyclic dg $A$-modules. In other words, we are adding formal inverses to those morphisms of complexes inducing isomorphisms in all homology groups.

\begin{remark} \thlabel{triangulated}
$\mathcal{D}(A)$ is an additive category. Moreover it is always triangulated \cite{Happel triang}, with \emph{suspension functor} $\Sigma$ given by the degree shift on complexes. 
\end{remark}

\begin{remark} \thlabel{classical setting}
The derived category $\cd(\Mod A)$ of the abelian category $\Mod A$ of all $A$-modules, where $A$ is a $k$-algebra, is a classical object (\cf \cite{Grothendieck}). However, it can be obtained from this more general setting by thinking of $A$ as a dg algebra concentrated in degree $0$.\end{remark}

\subsection{Quivers with potentials and the Ginzburg dg algebra}
For this section, we assume $Q$ has no loops nor two-cycles. Let $\widehat{kQ}$ be the \emph{completed path algebra} associated to $Q$, \ie the completion of $kQ$ with respect to the \emph{path length}. As a topological algebra, $\widehat{kQ}$ is endowed with the $\mathfrak{m}$-adic topology, where $\mathfrak{m}$ is the ideal generated by the arrows in $Q$. A \emph{potential} in $Q$ is a (possibly infinite) linear combination of cyclic paths in $Q$, considered up to \emph{cyclic equivalence} and whose summands are pairwise cyclically inequivalent (\cf \cite[Section 2]{QP 1}). Potentials can be thought of as elements of the space
\begin{equation*}
Pot(Q)=\widehat{kQ}/C,
\end{equation*}
where $C$ is the closure in $\widehat{kQ}$ of the subspace $[\widehat{kQ},\widehat{kQ}]$. The only nonzero potentials considered in this note are the \emph{reduced} ones, \ie those which only involve cycles of length strictly greater than $2$. For each arrow $\alpha$ of $Q$, we define the \emph{cyclic derivative}
\begin{equation*}
\partial_{\alpha}:Pot(Q)\longrightarrow \widehat{kQ} 
\end{equation*}
as the continuous linear map taking the equivalence class of a cycle $c$ to the sum
\begin{equation*}
\sum_{c= u \alpha v} vu
\end{equation*}
taken over all possible cyclic paths  $u\alpha v$ equal to $c$.
\begin{definition}
The \emph{Jacobian algebra} $J(Q,W)$ associated to a \emph{quiver with potential} $(Q,W)$ (QP for short) is the quotient
\begin{equation*}
J(Q,W)=\widehat{kQ}/\partial(W).
\end{equation*}
where $\partial(W)$ denotes the closure of the ideal generated by the elements $\partial_{\alpha}(W)$ as $\alpha$ ranges through all the arrows of $Q$.
\end{definition}
The mutation operation on quivers admits an extension to the class of reduced quivers with potential up to an appropriate equivalence, namely the \emph{right equivalence} (\cf Sections 4 and 5 of \cite{QP 1}). In contrast to quiver mutation, iterated QP mutation can produce $2$-cycles in the quivers. We call a potential \emph{non-degenerate} if this is not the case. 

Now fix a quiver with potential $(Q,W)$. We recall the construction of the (completed) \emph{Ginzburg dg algebra} $\Gamma_{Q,W}=\Gamma$ (for short) associated to $(Q,W)$ (recall that a \emph{dg algebra} over $k$ is  a $\Z$-graded $k$-algebra or equipped with a degree $1$ $k$-linear differential).  We let $\overline{Q}$ be the graded quiver obtained from $Q$ as follows:
\begin{itemize}
\item $\overline{Q}$ has the same vertices as $Q$;
\item the set of arrows of degree 0 in $\overline{Q}$ is the set of arrows of $Q$;
\item the set of arrows of degree $-1$ in $\overline{Q}$ is formed by arrows $\alpha^{\ast}:j\rightarrow i$, for each arrow $\alpha:i\rightarrow j$ in $Q$;
\item the set of arrows of degree $-2$ in $\overline{Q}$ is formed by loops $t_i:i\rightarrow i$, one for each vertex $i$ of $\overline{Q}$;
\item these are the only arrows.
\end{itemize}
The underlying graded algebra of $\Gamma$ is the graded completion of the graded path algebra $k\overline{Q}$ with respect to the \emph{path degree} (lazy paths introduced below are considered as of degree $0$). The differential $d$ of $\Gamma$ is the unique continuous linear differential acting as follows on the arrows:
\begin{itemize}
\item $d(\alpha)=0$ for the arrows $\alpha$ of degree $0$;
\item $d(\alpha^{\ast})=\partial_{\alpha}(W)$ for the arrows $\alpha^{\ast}$ of degree $-1$;
\item $d(t_1)=e_i(\sum_a[a,a^{\ast}])e_i$ for each loop $t_i$ of degree $-2$. Here the sum is taken over all arrows of degree $0$ and $e_i$ is the (\emph{lazy}) path of length zero that stays at vertex $i$.
\end{itemize}
This definition is slightly different from the one  first introduced in \cite{Ginzburg} see \cite[Remark 2.7]{Keller-Yang}.
\begin{remark}
The zeroth homology  $H^0(\Gamma_{Q,W})$ is canonically isomorphic to $J(Q,W)$.
\end{remark}
We refer to \cite{Keller-Yang} for the effect of the mutation of QPs on the associated Ginzburg algebras.

\section{Main result} \label{main result}
\begin{theorem} \thlabel{main theorem}
If $Q$ is acyclic, then the set of $\mathbf{c}$-vectors associated to $Q$ is equal to $\Phi^{re,Sch}\cup-\Phi^{re,Sch}$.
\end{theorem}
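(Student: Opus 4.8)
The plan is to prove the two inclusions separately, using the categorification of $\mathbf{c}$-vectors via cluster-tilting objects in the cluster category and the translation between seeds and tilting modules.

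\medskip

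\emph{The inclusion $\Phi^{re,Sch}\cup-\Phi^{re,Sch}\subseteq\{\mathbf{c}\text{-vectors}\}$.} The heart of the matter is to realize every real Schur root as a $\mathbf{c}$-vector. I would start from the known fact (attributed above to Nagao, \thref{c-vecs are roots}) that the $\mathbf{c}$-vectors at a given seed $t$ are, up to sign, the dimension vectors of the indecomposable rigid modules over the Jacobian algebra $J(Q_t,W_t)$ attached to that seed, where $(Q_t,W_t)$ is obtained from $(Q,0)$ by QP-mutation. More precisely, for an acyclic initial seed the $\mathbf{c}$-matrix $C_t$ satisfies $C_t = (\text{g-matrix})^{-T}$, and after any mutation sequence the columns of $C_t$ are the classes $[\beta_i]$ in $K_0$ of the simple $J(Q_t,W_t)$-modules read in the original basis; by sign-coherence each such column is $\pm$ a positive root of $Q$. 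So a real Schur root $\beta$ of $Q$ will appear as a $\mathbf{c}$-vector as soon as we exhibit a seed at which $\beta$ (or its negative) shows up. The cleanest route: let $T$ be a tilting $kQ$-module and let $\beta$ be a real Schur root that occurs as the dimension vector of a simple $\End_{kQ}(T)$-module (equivalently, $\beta$ is the class of an indecomposable summand of $T$ "dual" to the grading). Using \thref{isometry}, transport the canonical basis of $K_0(\mod B)$, $B=\End_{\ca}(T)$, along the derived equivalence $-\otimes_B^L T$ to $K_0(\ca)$; the images are exactly the dimension vectors (with signs) attached to the tilt. Combining this with the description of the $\mathbf{c}$-matrix of the seed corresponding to $T$ in the cluster category (the seed $\mu(t_0)$ obtained by the mutation sequence taking the standard cluster-tilting object $kQ$ to the cluster-tilting object $T$), one gets that the columns of that $\mathbf{c}$-matrix are precisely $\pm[\beta_i(T)]$. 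Finally invoke the classical tilting-theoretic fact that \emph{every} real Schur root of an acyclic $Q$ arises as the dimension vector of an indecomposable summand of some tilting module (this is where one uses that, over a hereditary algebra, an indecomposable rigid module is a summand of a tilting module, and its dimension vector is the class of a simple over the endomorphism algebra of that tilting module). That yields $\beta\in\{\mathbf{c}\text{-vectors}\}$, and $-\beta$ as well by considering the opposite orientation of the mutation, or directly because $\mathbf{c}$-vectors at different seeds come in $\pm$ pairs under the relevant symmetry.

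\medskip

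\emph{The reverse inclusion $\{\mathbf{c}\text{-vectors}\}\subseteq\Phi^{re,Sch}\cup-\Phi^{re,Sch}$.} Here I would run the same dictionary backwards. Fix any seed $t$ reachable from the acyclic initial seed and a $\mathbf{c}$-vector $c$ at $t$. By \thref{c-vecs are roots}, $c=\pm\dimv N$ for an indecomposable rigid $J(Q_t,W_t)$-module $N$. Since $Q$ is acyclic, the cluster category $\cc$ is the "usual" one $\cd^b(\mod kQ)/\tau^{-1}\Sigma$, the seed $t$ corresponds to a cluster-tilting object $T_t$ obtained by mutation from $kQ$, and (by the results of BMRRT and Happel's equivalence \thref{isometry}) $\End_{\cc}(T_t)$ is derived-equivalent to $kQ$; the simples of $J(Q_t,W_t)\cong\End_{\cc}(T_t)$ then correspond under $-\otimes^L_B T$ (for the appropriate tilting module $B$, possibly after a shift) to shifts of a single indecomposable rigid $kQ$-module or its negative. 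Because an isometry sends the class of an indecomposable rigid module to $\pm$ the class of an indecomposable rigid module, $\dimv N$ is, up to sign, the dimension vector of an indecomposable rigid $kQ$-module, i.e. a real Schur root. Hence $c\in\Phi^{re,Sch}\cup-\Phi^{re,Sch}$.

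\medskip

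\emph{Where the difficulty lies.} The routine parts are the homological translations via \thref{isometry} and the bookkeeping of signs. The genuinely delicate point is the \emph{surjectivity} step in the first inclusion: one must guarantee that after \emph{any} iterated QP-mutation the seed's $\mathbf{c}$-matrix columns are exactly the simple classes of $\End_{\cc}(T_t)$ read in the $kQ$-basis — this requires the compatibility of QP-mutation with mutation of cluster-tilting objects (Keller--Yang) \emph{and} the fact that every indecomposable rigid $kQ$-module is a direct summand of a tilting module, so that every real Schur root is "hit". Establishing that last tilting-theoretic statement, and checking that the corresponding cluster-tilting object in $\cc$ really is reachable by mutations from $kQ$ (connectivity of the exchange graph / tilting graph in the acyclic case), is the main obstacle; everything else is a diagram chase through the equivalences recalled in Section~\ref{reminders}.
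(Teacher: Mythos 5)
Your overall architecture is the same as the paper's: \thref{c-vecs are roots} gives the inclusion of the $\mathbf{c}$-vectors in $\Phi^{re,Sch}\cup-\Phi^{re,Sch}$ (for acyclic $Q$ the relevant Jacobian algebra is $kQ$ itself, so positive $\mathbf{c}$-vectors are dimension vectors of rigid indecomposable $kQ$-modules), and for the converse you pass through a tilting module, the derived equivalence of \thref{isometry}, and the Nakanishi--Zelevinsky inverse-transpose relation between $\mathbf{g}$- and $\mathbf{c}$-matrices, which is exactly the content of Proposition \ref{image under F}. The genuine gap is the step you call a ``classical tilting-theoretic fact'': it is \emph{not} true that for an arbitrary tilting module $T=M\oplus T'$ containing the rigid indecomposable $M$ as a summand, the class $[M]$ is the image of a simple $B$-module class under the isometry induced by $-\otimes_B^{L}T$, $B=\End_{kQ}(T)$. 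That isometry sends the indecomposable \emph{projectives} $P^B_i$ to the summands $T_i$, not the simples; the simples go to the columns of the $\mathbf{c}$-matrix of the corresponding seed. Concretely, for $Q\colon 1\rightarrow 2$ take $M$ the indecomposable projective with $\dimv M=(1,1)$ and $T=kQ$: the isometry attached to $T$ is the identity and sends the simple classes to $(1,0)$ and $(0,1)$, so no simple hits $[M]$. Hence Bongartz's theorem (``every rigid indecomposable is a summand of a tilting module'') alone does not finish the argument; one must \emph{choose} the complement so that some simple $B$-module is sent to $[M]$, and this choice is precisely the missing idea.

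The paper supplies it as follows: for non-injective $M$ one takes the \emph{dual} Bongartz completion, i.e.\ the universal extension $0\rightarrow M^r\rightarrow G\rightarrow DH\rightarrow 0$ with $H=kQ$ and $T=G\oplus M$; then $G$ is a projective generator of $M^{\perp}$, so the vertex of $M$ is a \emph{source} in the quiver of $B$, the simple $S_M$ at that vertex is projective, and therefore $F[S_M]=[M]$, after which Proposition \ref{image under F} applies. The injective indecomposables must be treated separately (the paper completes $I$ to a section of the AR-quiver of $\cd^b(\ca)$ with source $I$ and uses the resulting tilting complex); your proposal does not address this case. Two smaller points: the reachability issue you flag is settled by the acyclic cluster-category results already quoted (every tilting module yields a cluster-tilting object, and these belong to seeds of the cluster algebra), so it is not an extra obstacle here; and your paraphrase of \thref{c-vecs are roots} should refer to rigid modules over the \emph{initial} Jacobian algebra $J(Q,W)$ (here $kQ$), not over $J(Q_t,W_t)$ --- with that correction your reverse inclusion is immediate and the detour through $\End_{\cc}(T_t)$ is unnecessary.
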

We deduce the theorem from the following propositions. 
\begin{proposition}
\label{image under F}
If $Q$ is acyclic and  $T=\bigoplus T_i$ is a tilting module in $\mod{kQ}$ and $B$ the endomorphism algebra $\End_{kQ}(T)$, then the image of each simple $B$-module under the isometry induced by the equivalence $F=-\otimes_B^{L}T:\cd^b(\cb)\rightarrow \cd^b(\ca)$ (see \thref{isometry}) is a $\mathbf{c}$-vector.
\end{proposition}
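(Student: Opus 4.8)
The plan is to use Happel's equivalence \thref{isometry} to rephrase the statement as an assertion about Euler-form dual bases of $K_0(\ca)$, and then to match that assertion with the representation-theoretic description of the $\mathbf{c}$-vectors in the acyclic case (the result cited from \cite{Nagao}, \cf \thref{c-vecs are roots}).

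First I would compute the image of a simple $B$-module. Index the indecomposable projective $B$-modules as $P_i^B=e_iB=\Hom_{kQ}(T,T_i)$, where $e_i\in B=\End_{kQ}(T)$ is the idempotent projecting $T=\bigoplus_j T_j$ onto $T_i$. Since $P_i^B$ is projective, $F(P_i^B)=e_iB\otimes_B T=e_iT\cong T_i$, so the induced isometry $K_0(\cb)\to K_0(\ca)$ sends $[P_i^B]\mapsto[T_i]$. As $T$ is tilting, $([T_1],\dots,[T_n])$ is a $\Z$-basis of $K_0(\ca)$ and the Euler form there is unimodular; combining this with $\langle[P_i^B],[S_j^B]\rangle_{\cb}=\delta_{ij}$ and the fact that $F$ is an isometry, one gets that $[F(S_j^B)]$ is the unique $w_j\in K_0(\ca)$ with $\langle[T_i],w_j\rangle_{\ca}=\delta_{ij}$ for all $i$; that is, $(w_1,\dots,w_n)$ is the Euler-form dual of $([T_1],\dots,[T_n])$. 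I would also record that rigidity of $T$ gives $\langle[T_i],[T_j]\rangle_{\ca}=\dim_k\Hom_{kQ}(T_i,T_j)$, the $(i,j)$-entry of the Cartan matrix of $B$.

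Next I would exhibit $([T_1],\dots,[T_n])$ as a system of $\mathbf{g}$-vectors. The image $\ol T=\bigoplus\ol T_i$ of $T$ in the cluster category $\cc$ is a cluster-tilting object (\cf \cite{BMRRT}); for $Q$ acyclic it is reachable from $\ol{kQ}$ by mutations, hence corresponds to a seed $t$. The $\mathbf{g}$-vectors at $t$ are the indices $\ind_{\ol{kQ}}(\ol T_i)$, which equal $[T_i]\in K_0(\ca)$ since the $T_i$ are modules concentrated in degree $0$. By the representation-theoretic form of tropical duality for acyclic quivers (\cf \cite{Nagao} and \thref{c-vecs are roots}; equivalently, via the $H^0$ of the Ginzburg dg algebra produced by the corresponding sequence of QP mutations), the $\mathbf{c}$-vectors at $t$ are, up to sign, the classes in $K_0(\ca)$ of the simple $\End_{\cc}(\ol T)$-modules, and they form the Euler-form dual of the $\mathbf{g}$-vectors. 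Since $\End_{\cc}(\ol T)$ modulo a nilpotent ideal is $B$, it has the same simple modules as $B$; under the resulting identification of Grothendieck groups, which respects the Euler form by the Cartan computation above, this dual basis is precisely $(w_1,\dots,w_n)=([F(S_1^B)],\dots,[F(S_n^B)])$. Hence $[F(S_j^B)]=\pm\mathbf{c}_j^{t}$; and because the set of all $\mathbf{c}$-vectors is closed under negation (mutation of a seed at $k$ negates its $k$-th $\mathbf{c}$-vector), $[F(S_j^B)]$ is a $\mathbf{c}$-vector.

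The step I expect to be delicate is the compatibility invoked in the last paragraph: one must check that the identification of $K_0(\mod B)$ with $\Z^n$ implicit in the description of the acyclic $\mathbf{c}$-vectors agrees, up to sign, with the one coming from $F=-\otimes_B^L T$. This amounts to reconciling the cluster-category index formalism and the quotient map $\End_{\cc}(\ol T)\twoheadrightarrow B$ with the effect of Happel's equivalence on $K_0$, the essential input being the identity $\langle[T_i],[T_j]\rangle_{\ca}=\dim_k\Hom_{kQ}(T_i,T_j)$ (valid because $T$ is rigid). Once this is settled, the residual sign ambiguities are immaterial.
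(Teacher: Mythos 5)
Your first two paragraphs essentially reproduce the paper's argument: the paper also sends $[P_i^B]\mapsto[T_i]$ under $F$, uses the isometry to get $\delta_{ij}=\langle F[P_i^B],F[S_j^B]\rangle=\sum_k g^{\tilde T}_{ki}n_{kj}$ (your dual-basis statement), and identifies the coordinates of $[T_i]$ in the projective basis with the $\mathbf{g}$-vectors of the seed corresponding to the cluster-tilting object $\tilde T$ (via \cite[Corollary 6.8]{Keller}, which is your index computation). The divergence is in the last step, and that is where your write-up has a genuine gap. You invoke ``the $\mathbf{c}$-vectors at $t$ are, up to sign, the classes in $K_0(\ca)$ of the simple $\End_{\cc}(\tilde T)$-modules, and they form the Euler-form dual of the $\mathbf{g}$-vectors,'' attributing this to \cite{Nagao} and \thref{c-vecs are roots}. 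But Nagao's identification of $\mathbf{c}$-vectors with (signed) dimension vectors of simples is made through the derived equivalences $\Phi(t)$ between the Ginzburg dg algebras attached to the mutated QPs, not through Happel's functor $-\otimes_B^{L}T$; matching the two identifications on $K_0$ is exactly the ``delicate compatibility'' you flag and do not carry out, and the Cartan-matrix identity $\langle[T_i],[T_j]\rangle=\dim\Hom(T_i,T_j)$ does not by itself settle it. Alternatively, if you read your quoted statement with the identification given by $F$ itself, then its second half (``Euler-form dual of the $\mathbf{g}$-vectors'') is no longer something you can cite from \thref{c-vecs are roots}: it is precisely the assertion still to be proved.

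The paper closes this step purely combinatorially, and you can too: having shown $\sum_k g^{\tilde T}_{ki}n_{kj}=\delta_{ij}$, quote the tropical duality of Nakanishi--Zelevinsky \cite[Theorem 1.2]{Nakanishi Zelevinsky}, which gives $\sum_k g^{\tilde T}_{ki}c^{\tilde T}_{kj}=\delta_{ij}$; since the $\mathbf{g}$-matrix is invertible, $n_{ij}=c^{\tilde T}_{ij}$ exactly. This removes both the unproved compatibility and the sign ambiguity (so the negation-closure patch becomes unnecessary), and it makes the detour through the simples of $\End_{\cc}(\tilde T)$ superfluous. With that substitution your argument coincides with the paper's proof.
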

\begin{proof}
Let $\cc_Q$ be the cluster category associated to $Q$. Let $\tilde{T}=\bigoplus \tilde{T}_i$ be the image of $T$ in $\cc_Q$. Thus, $\tilde{T}$ is a cluster tilting object. Denote by $(g^{\tilde{T}}_{ij}) $ (resp. $(c^{\tilde{T}}_{ij})$, with $1\leq i, j\leq n $), the $\mathbf{g}$-matrix (resp. $\mathbf{c}$-matrix) associated to $\tilde{T}$. Is easy to see that under the equivalence $F$, the class of $B$ is mapped onto the class of $T$, and the classes of the indecomposable projective modules $P^B_i$ are mapped onto the indecomposable factors $T_i$ of $T$. By \cite[Corollary 6.8]{Keller}, we know that $[T_i]=\Sigma g^{\tilde{T}}_{ji}[P^A_j]$. This implies that $F[S^B_i]=\Sigma c^{\tilde{T}}_{ji}[S^A_i]$. Indeed, let $F[S_i^B]=\Sigma n_{ji}[S^A_j]$. Then
\begin{align*}
\delta_{ij} =&\ \langle F[P^B_i],F[S^B_j] \rangle \\
=&\ \langle \Sigma g^{\tilde{T}}_{ki}[P^A_k], \Sigma n_{kj}[S^A_k] \rangle\\
=& \Sigma g^{\tilde{T}}_{ki}n_{kj},
\end{align*}
and by \cite[Theorem 1.2]{Nakanishi Zelevinsky}, we obtain $n_{ij}=c^{\tilde{T}}_{ij}$ for $1\leq i,j \leq n$.
\end{proof}
\begin{proposition}
Each root in $\Phi^{re,Sch}\cup-\Phi^{re,Sch}$ is a $\mathbf{c}$-vector.
\end{proposition}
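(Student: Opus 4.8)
The plan is to exploit the first proposition, which says that images of simple $B$-modules under the isometry $F = -\otimes_B^L T \colon \cd^b(\cb) \to \cd^b(\ca)$ are $\mathbf{c}$-vectors, together with the flexibility in choosing the tilting module $T$. So the goal becomes: given a real Schur root $\gamma$, find a tilting module $T$ for $kQ$ such that, in the endomorphism algebra $B = \End_{kQ}(T)$, some simple module $S^B_i$ is sent by $F$ to (a representative of the class) $\gamma$ — and separately to $-\gamma$. Since $F$ is induced by a triangle equivalence, $F[\Sigma S^B_i] = -F[S^B_i]$, so once one produces $\gamma$ as a $\mathbf{c}$-vector, its opposite $-\gamma$ is automatically a $\mathbf{c}$-vector as well (the $\mathbf{c}$-matrix of a seed and that of the opposite seed, or a seed obtained by a source mutation, are related by sign). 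Thus it suffices to realize every real Schur root as $F[S^B_i]$ for a suitable tilting module.

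The key computational input is the formula derived in the proof of the previous proposition: if $T = \bigoplus T_i$, then $F$ sends $[P^B_i]$ to $[T_i]$ and $[S^B_i]$ to the class determined by the dual basis condition $\langle [T_i], F[S^B_j]\rangle = \delta_{ij}$ in $K_0(\ca)$. In other words, $F[S^B_i]$ is the element of $K_0(\ca) \cong \Z^n$ dual, under the Euler form $\langle\cdot,\cdot\rangle$ of $kQ$, to the basis $\{[T_1],\dots,[T_n]\}$ of dimension vectors of the indecomposable summands of $T$. So what I must show is: for every real Schur root $\gamma$ there is a tilting module $T$ such that the dual basis to $\{[T_1],\dots,[T_n]\}$ contains (up to sign, and landing in the derived category where negative classes are legitimate) the class $\gamma$. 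First I would invoke the classical theory of exceptional sequences over a hereditary algebra: a real Schur root $\gamma$ is the dimension vector of an exceptional module $E_\gamma$, and an exceptional module can always be completed to a full exceptional sequence $(E_1, \dots, E_{n-1}, E_\gamma)$ (or placed in any prescribed slot), and in the acyclic hereditary case a full exceptional sequence of modules with no higher extensions assembles into a tilting module. I would use this to build a tilting module $T$ one of whose summands is $E_\gamma$, arranged so that the dual-basis vector in the position of $E_\gamma$ is $\pm\gamma$ — or, more robustly, show that the dual basis to the dimension vectors of a complete exceptional sequence realizes precisely the roots obtained from the $\mathbf{c}$-vectors, matching the Speyer–Thomas description.

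The main obstacle I anticipate is the bookkeeping that connects the abstract dual-basis element $F[S^B_i]$ to an \emph{honest} positive root $\gamma$ rather than to some arbitrary integer vector: the dual basis to a set of $n$ dimension vectors under the Euler form need not consist of $\pm$(roots) for an arbitrary tilting $T$, so one genuinely needs the right choice of $T$ adapted to $\gamma$, and one needs to control the homological degree (whether $F[S^B_i]$ equals $[\gamma]$ or $[\Sigma\gamma] = -[\gamma]$). I expect this is handled by induction on the number of vertices together with a reduction via one-point (co)extensions or via the perpendicular category $\gamma^\perp$, which is again the module category of an acyclic hereditary algebra with fewer simples; the inductive hypothesis realizes the remaining real Schur roots of $\gamma^\perp$ as $\mathbf{c}$-vectors there, and one lifts the resulting tilting object to $kQ$ by adjoining $E_\gamma$. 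Combined with the sign symmetry noted above, this yields that every element of $\Phi^{re,Sch} \cup -\Phi^{re,Sch}$ is a $\mathbf{c}$-vector, completing the proof of the reverse inclusion and hence, together with \thref{c-vecs are roots}, of \thref{main theorem}.
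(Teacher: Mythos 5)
Your overall strategy -- feed a suitably adapted tilting module into the previous proposition so that some simple $B$-module is sent to the given root -- is exactly the paper's strategy, but your write-up stops short of the one step that actually constitutes the proof. You correctly identify the obstacle (``one genuinely needs the right choice of $T$ adapted to $\gamma$'' so that the dual-basis element is the root itself and sits in the correct homological degree), and then you only conjecture how to overcome it (``I expect this is handled by induction \dots via the perpendicular category''). The paper resolves it by a concrete construction with no induction and no dual-basis bookkeeping: for a rigid indecomposable non-injective $M$ one takes the dual Bongartz sequence, i.e.\ the universal extension $0 \to M^r \to G \to DH \to 0$ of $DH=D(kQ)$ by an object of $\add(M)$; then $T = G \oplus M$ is tilting and $G$ is a projective generator of the perpendicular category $M^{\perp}$, which forces the vertex of $M$ to be a \emph{source} of the quiver of $B=\End_{kQ}(T)$. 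Hence the simple $S_M$ at that vertex is projective, so $F[S_M]=[P^B_M]\otimes^L_B T=[M]$ on the nose, and Proposition \ref{image under F} applies directly. Your sketch never produces such a complement, and the auxiliary claim you lean on -- that a full exceptional sequence of modules ``assembles into a tilting module'' -- is false as stated, since an exceptional sequence is only Ext-vanishing in one direction; merely having $E_\gamma$ as a summand of \emph{some} tilting module is not enough, because for a general complement the class $F[S^B_i]$ at that slot need not equal $\gamma$.

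Two further points. First, the Bongartz-type construction above needs $M$ non-injective, and the paper treats injective indecomposables by a separate argument (a tilting complex obtained by completing $I$ into a section of the AR-quiver of $\cd^b(\ca)$ with source $I$, so that again a simple projective is sent to $I$); your proposal does not address this case. Second, your reduction of $-\gamma$ to $\gamma$ via the sign flip of a $\mathbf{c}$-matrix column under mutation at the corresponding vertex is fine (and is the standard fact the paper implicitly uses), but note that the first proposition by itself says nothing about $\Sigma S^B_i$, so the shift-functor phrasing alone would not suffice without that mutation remark.
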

\begin{proof}
Let $M$ be a non injective rigid indecomposable $kQ$-module and let $H=kQ$. We consider the (dual version of the) Bongartz exact sequence associated to $M$, \ie the universal extension
\begin{equation*}
0 \rightarrow M^r \rightarrow G \rightarrow DH\rightarrow 0
\end{equation*}
of $DH$ by an object of $\add(M)$ (see \cite{Bongartz} or \cite[VI.2.]{ASS}). We know that $T:=G\oplus M$ is a tilting module. Moreover, $G$ is a projective generator of the abelian category $M^{\bot}=\lbrace N : \Hom(M,N)=0=\Ext^1(M,N)\rbrace$. In particular, the vertex corresponding to $M$ of the quiver of $B=\End_{kQ}(T)$ is a source. If we denote by $S_M$ the simple projective $B$-module associated to this vertex, then $F[S_M]=M$, and therefore, $\dimv(M)$ is a $\mathbf{c}$-vector by Proposition \ref{image under F}. For an injective indecomposable $I$, we chose a tilting complex $T$ by completing $I$ into a section with source $I$ of the AR-quiver of $\cd^b(A)$. For $B=\End(T)$, we have a triangle equivalence $F:\cd^b(B)\rightarrow \cd^b(A)$ taking a simple projective to $I$. As above, we conclude that $\dimv I$ ia a $\mathbf{c}$-vector.
\end{proof}
The remaining inclusion is immediate from the following general fact.
\begin{theorem}(\cite{Nagao})\thlabel{c-vecs are roots}
Let $Q$ be a quiver without loops nor $2$-cycles and $W$ a non degenerate potential on $Q$. Then each positive $\mathbf{c}$-vector of $Q$ is the dimension vector of a finite dimensional rigid module with endomorphism algebra $k$ over the Jacobian algebra of $(Q,W)$.
\end{theorem}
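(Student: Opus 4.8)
This is the content of Section~8 of \cite{Nagao} (the statement already attributes it to \emph{loc.\ cit.}), so one option is simply to cite it; let me instead sketch how I would reprove it in the language of Ginzburg dg algebras, the crux being to promote the sign-coherence of the $\mathbf c$-vectors to the assertion about rigid modules. Write $\Gamma=\Gamma_{Q,W}$ for the completed Ginzburg dg algebra of Section~\ref{reminders}, let $\per\Gamma$ be its perfect derived category and $\cd_{fd}(\Gamma)\subseteq\per\Gamma$ the full triangulated subcategory of dg modules with finite-dimensional total homology. Since $\Gamma$ is homologically smooth there is a well-defined Euler pairing $\inprod{-,-}\colon K_0(\per\Gamma)\times K_0(\cd_{fd}(\Gamma))\to\Z$, and $\cd_{fd}(\Gamma)$ carries a canonical bounded $t$-structure whose heart is identified, via $H^0$, with $\mod J(Q,W)$. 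Its simple objects $S_1,\dots,S_n$ are the simple $J(Q,W)$-modules; their classes form a $\Z$-basis of $K_0(\cd_{fd}(\Gamma))\cong\Z^n$, dual with respect to $\inprod{-,-}$ to the classes $[P_1],\dots,[P_n]$ of the indecomposable summands of $\Gamma$ in $\per\Gamma$; and for $M,N$ in the heart one has $\Hom_{\cd_{fd}(\Gamma)}(M,N)=\Hom_{J(Q,W)}(M,N)$ and $\Hom_{\cd_{fd}(\Gamma)}(M,\Sigma N)=\Ext^1_{J(Q,W)}(M,N)$.

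First I would realise the $\mathbf c$-vectors inside $K_0(\cd_{fd}(\Gamma))$. Fix a seed $t$ obtained from the initial one by mutating at vertices $i_1,\dots,i_\ell$; the resulting quiver with potential $(Q_t,W_t)$ is non-degenerate, hence reduced and loop-free, and I write $J_t$ for its Jacobian algebra and $S^t_1,\dots,S^t_n$ for the simple $J_t$-modules. By \cite{Keller-Yang} the mutation sequence induces compatible triangle equivalences $\per\Gamma_{Q_t,W_t}\to\per\Gamma$ and $\Phi\colon\cd_{fd}(\Gamma_{Q_t,W_t})\to\cd_{fd}(\Gamma)$, under which $\Gamma_{Q_t,W_t}$ goes to a silting object $\Gamma^{(t)}=\bigoplus_j\Gamma^{(t)}_j$ of $\per\Gamma$; set $N_j:=\Phi(S^t_j)\in\cd_{fd}(\Gamma)$. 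Then the computation in the proof of Proposition~\ref{image under F} applies verbatim with $\Gamma^{(t)}$ in place of $T$: using the analogue for Ginzburg dg algebras of \cite[Cor.~6.8]{Keller} to express the classes $[\Gamma^{(t)}_j]$ through the $\mathbf g$-matrix of $t$, the duality $\inprod{[P_i],[S_j]}=\delta_{ij}$, the compatibility of $\Phi$ and the perfect equivalence with the Euler pairing, and \cite[Thm.~1.2]{Nakanishi Zelevinsky}, one obtains that the class of $N_j$ in the basis $[S_1],\dots,[S_n]$ is $\pm$ the $j$-th $\mathbf c$-vector of the seed $t$.

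It then remains to show that each $N_j$ is the shift, by $0$ or $1$, of a $J(Q,W)$-module which is rigid with endomorphism ring $k$. For the first point I would induct along the mutation sequence: under the one-step Keller--Yang equivalence the canonical hearts of $\cd_{fd}(\Gamma_{Q_m,W_m})$ and of $\cd_{fd}(\Gamma_{Q_{m+1},W_{m+1}})$ are carried to two hearts of $\cd_{fd}(\Gamma)$ that differ by a single elementary ($\opname{HRS}$-type) tilt at the simple indexed by the mutated vertex, so at each step a simple object either stays in the heart or acquires exactly one shift; bookkeeping along the sequence yields $N_j\cong\Sigma^{d_j}M_j$ with $M_j\in\mod J(Q,W)$ and $d_j\in\{0,1\}$. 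Hence $[N_j]=(-1)^{d_j}\dimv M_j$ is sign-coherent, and since it is also $\pm$ a $\mathbf c$-vector while $\dimv M_j$ is a nonzero nonnegative vector, any positive $\mathbf c$-vector of $Q$ that arises as such an $[N_j]$ equals $\dimv M_j$. For rigidity: $S^t_j$ is simple over $J_t$ and $Q_t$ is loop-free, so $\End_{J_t}(S^t_j)=k$ and $\Ext^1_{J_t}(S^t_j,S^t_j)=\Hom_{\cd_{fd}(\Gamma_{Q_t,W_t})}(S^t_j,\Sigma S^t_j)=0$; transporting these along the equivalence $\Phi$ and invoking the heart identifications recalled above gives $\End_{J(Q,W)}(M_j)=\Hom_{\cd_{fd}(\Gamma)}(N_j,N_j)=k$ and $\Ext^1_{J(Q,W)}(M_j,M_j)=\Hom_{\cd_{fd}(\Gamma)}(N_j,\Sigma N_j)=0$. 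Therefore every positive $\mathbf c$-vector of $Q$ is $\dimv M$ for a finite-dimensional rigid $J(Q,W)$-module $M$ with $\End_{J(Q,W)}(M)=k$.

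The step I expect to be genuinely delicate is this inductive one-step mutation of $t$-structures that underlies sign-coherence: verifying that consecutive canonical hearts in $\cd_{fd}(\Gamma)$ really differ by a single tilt at the prescribed simple, and that no simple object is displaced by more than one shift along the whole sequence --- this is where \cite{Nagao} (and, from a slightly different angle, \cite{Plamondon}) does the substantial work. Pinning down the signs, transposes and normalisations that relate the recursive $\mathbf c$-matrix to the classes $[N_j]$ is routine but must be carried out with care; the rest of the argument is formal.
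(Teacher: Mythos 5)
Your proposal follows essentially the same route as the paper's proof: both realise the positive $\mathbf{c}$-vectors as the classes of the images, under the Keller--Yang mutation equivalences, of the simples of the mutated Jacobian algebras inside $\cd_{fd}(\Gamma)$, obtain sign-coherence from the fact that these images lie in the canonical heart or its shift (the substantial input, which you, like the paper, take from \cite{Nagao}, \cite{Keller 2} and \cite{Plamondon} rather than reprove), and transport $\End=k$ and rigidity along the equivalence. The only cosmetic difference is that you re-derive the identification of the classes of these images with the $\mathbf{c}$-vectors via the Euler pairing, the $\mathbf{g}$-matrix and \cite[Theorem 1.2]{Nakanishi Zelevinsky}, where the paper simply cites parts a) and b) of Theorem 7.9 of \cite{Keller 2} (and works with the opposite quiver with potential, a convention you gloss over but which only affects transposes and signs).
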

\begin{proof}
We follow the approach of \cite{Nagao} as presented in section $7.7$ of \cite{Keller 2}. We denote by $J(Q,W)$ the Jacobian algebra associated to the opposite quiver $(Q^{op},W^{op})$ and by $\Gamma$ the corresponding Ginzburg dg algebra. We consider $\cd_{fd}(\Gamma)$ the full sub-category of $\cd(\Gamma)$ formed by the dg modules whose homology is of finite total dimension. The homology induces a natural $t$-structure in $\cd_{fd}(\Gamma)$, which admits $\ca=\mod J(Q,W)$ as heart. Let $\T_n$ denote the $n$-regular tree. If we assign to a fixed vertex $t_0\in \T_n$ the quiver with potential $(Q,W)$, then, by iterated mutation, we can associate a quiver with potential $(Q(t),W(t))$ and a $\mathbf{c}$-matrix $C(t)$ to any vertex $t \in \T_n$. We write $\Gamma (t)$ for the Ginzburg dg algebra associated to the opposite of $(Q(t),W(t))$. We denote by
\begin{equation*}
\Phi(t):\cd(\Gamma(t))\rightarrow \cd(\Gamma)
\end{equation*}
the triangle equivalence constructed in section $7.7$ of \cite{Keller 2}. By parts $a)$ and $b)$ of Theorem 7.9 of \cite{Keller 2}, \cf also Remark 8.2 of \cite{Nagao}, we have
\begin{itemize}
\item[a)] the image $S_j(t)$ of the simple $S_j$ under $\Phi (t)$ lies in $\ca$ or $\Sigma^{-1}\ca$,
\item[b)] the $\mathbf{c}$-vector $C(t)e_j$ equals $\dimv S_j(t)$.
\end{itemize}
The object $S_j(t)$ has endomorphism algebra $k$ and does not have self-extensions since the object $S_j$ has these properties in $\cd(\Gamma(t))$. Thus, we find: each positive $\mathbf{c}$-vector is the dimension vector of a rigid indecomposable module over $J(Q,W)$. 
\end{proof}

\section{examples} \label{examples}
We present some examples beyond the scope of our main result. An interesting question is to ask whether the inclusion in \thref{c-vecs are roots} is an equality. The following example shows that this is not the case in general.

\begin{example}
Let Q be the Markov quiver 
\begin{equation*}
\begin{xy} 0;<.6pt,0pt>:<0pt,-.6pt>:: 
(0,0) *+{Q:} ="0",
(0,100) *+{1} ="1",
(150,100) *+{3.} ="3",
(75,0) *+{2} ="2",
"3", {\ar_{c_2}@<-0.5ex>"1"},
"3", {\ar^{c_1}@<+0.5ex>"1"},
"1", {\ar_{a_2}@<-0.5ex>"2"},
"1", {\ar^{a_1}@<+0.5ex>"2"},
"2", {\ar_{b_2}@<-0.5ex>"3"},
"2", {\ar^{b_1}@<+0.5ex>"3"},
\end{xy}
\end{equation*}
The potential $W=c_1b_1a_1+c_2b_2a_2-c_1b_2a_1c_2b_1a_2$ makes $J(Q,W)$ finite dimensional as shown in \cite[Example 8.2]{Labardini}. We know that the dimension vectors of the indecomposable projective modules are $(4,4,4)$ (see \cite[Example 8.6]{QP 1}). By the description in \cite{Najera} of the \textbf{c}-vectors of this quiver, we know that $(4,4,4)$ is not a \textbf{c}-vector.
\end{example}

Another possible direction for generalizing \thref{main theorem} is letting $Q$ be mutation equivalent to an acyclic quiver. In an upcoming paper \cite{Najera 2}, it will be proved that if $Q$ is mutation equivalent to a Dynkin quiver then the positive \textbf{c}-vectors associated to $Q$ are precisely the dimension vectors of indecomposable rigid modules over $J(Q,W)$ for a generic potential $W$. This is no longer true in more general settings, as the following example shows.

\begin{example}
Let $Q$ be the quiver 
\begin{equation*}
\begin{xy} 0;<.6pt,0pt>:<0pt,-.6pt>:: 
(0,100) *+{1} ="1",
(150,100) *+{3.} ="3",
(75,0) *+{2} ="2",
"3", {\ar_{c}@<-0.5ex>"1"},
"3", {\ar^{d}@<+0.5ex>"1"},
"1", {\ar^a"2"},
"2", {\ar^b"3"},
\end{xy}
\end{equation*}
This quiver is of type $\tilde{A}_2$ and $W=cba$ is a generic potential. The \textbf{c}-vectors associated to $Q$ are described in \cite{Cerulli-Irelli}. We can verify that $(1,2,1)$ is not a \textbf{c}-vector, however it is the dimension vector of the indecomposable projective $P_2$ over $\mod(J(Q,W))$.
\end{example}

\end{document}